\definecolor{rojo}{RGB}{221,0,0}
\newtheorem{theorem}{Theorem}
\newtheorem{proposition}[theorem]{Proposition}
\newtheorem{lemma}[theorem]{Lemma}
\theoremstyle{definition}
\theoremstyle{remark}
\numberwithin{equation}{section}
\newcommand{\abs}[1]{\left\vert#1\right\vert}
\newcommand{\set}[1]{\left\{#1\right\}}
\newcommand{\norm}[1]{\left\Vert#1\right\Vert}
\def\zR{\ensuremath{\mathbb{R}}}
\def\pii{\left(}
\def\pdd{\right)}
\def\cii{\left[}
\def\cdd{\right]}
\def\lii{\left\{}
\def\ldd{\right\}}
\begin{document}
\title []{Mean value formulas for solutions of some degenerate elliptic equations and applications}
%
%


\author[]{Hugo Aimar}
\email{haimar@santafe-conicet.gov.ar}
\author[]{Gast\'{o}n Beltritti}
\email{gbeltritti@santafe-conicet.gov.ar}
\author[]{Ivana G\'{o}mez}
\email{ivanagomez@santafe-conicet.gov.ar}
\thanks{The research was supported  by CONICET, ANPCyT (MINCyT) and UNL}
\subjclass[]{Primary 26A33, 35J70. Secondary 35B65, 46E35.}
\keywords{Degenerate Elliptic Equations; Fractional Laplacian; Mean Value Formula; Besov Spaces; Gradient Estimates}

\begin{abstract}
We prove a mean value formula for weak solutions of $\hbox{div}(|y|^{a}\hbox{grad}\\u)=0$ in $\mathbb{R}^{n+1}=\{ (x,y):\ x\in \mathbb{R}^{n},\ y\in\mathbb{R} \}$, $-1<a<1$ and balls centered at points of the form $(x,0)$. We obtain an explicit nonlocal kernel for the mean value formula for solutions of $(-\triangle)^{s}f=0$ on a domain $D$ of $\mathbb{R}^{n}$. When $D$ is Lipschitz we prove a Besov type regularity improvement for the solutions of  $(-\triangle)^{s}f=0$.
\end{abstract}

\maketitle

\section*{Introduction}
In \cite{CaSi2007}, L.~Caffarelli and L.~Silvestre show how the fractional powers of $-\triangle$ in $\zR^{n}$ can be obtained as Dirichlet to Neumann type operators in the extended domain $\zR^{n+1}$. The operator in the extended domain is given by $L_{a}u=\hbox{div}\pii |y|^{a} \text{\hbox{grad}}\ u \pdd$, where $a\in (-1,1)$, $u=u(x,y)$, $x\in\zR^{n}$, $y\in\zR$ and \textit{div} and \textit{grad} are the standard divergence and gradient operators in $\zR^{n+1}=\lii (x,y) : x\in\zR^{n},\ y\in\zR  \ldd$. The exponent $a$ is related to the fractional power of the Laplacian $(-\triangle)^{s}$ through $2s=1-a$. Notice that when $a=0$ the operator $L_{a}$ is the Laplacian in $\zR^{n+1}$ and $s=\frac{1}{2}$. The theory of H\"{o}lder regularity of solutions through Harnack's inequalities, is one of the several results in \cite{CaSi2007}. This theory has been extended in \cite{ToSti2010} to other second order partial differential operators including the harmonic oscillator.

\medskip
Since for $a\in (-1,1)$ the weight $w(x,y)=|y|^{a}$ belongs to the Muckenhoupt class $A_{2}(\zR^{n+1})$, the regularity theory developed by Fabes, Kenig and Serapioni in \cite{FaKeSe82}, can be applied. The fact that $w$ is in $A_{2}(\zR^{n+1})$ follows easily from the fact that it is a product of the weight which is constant and equal to one in $\zR^{n}$ times the $A_{2}(\zR)$ weight $|y|^{a}$ for $a\in (-1,1)$. In particular Harnack's inequality and H\"{o}lder regularity of solutions are available.

\medskip
It seems to be clear that, when $a\neq 0$, the weight $w(x,y)=|y|^{a}$ introduces a bias  which prevents us from expecting mean values on spherical objects in $\zR^{n+1}$. Except at $y=0$, where the symmetry of $w$  with respect to the hyperplane $y=0$ may bring back to spheres their classical role. In \cite{FaGa87} some generalizations of classical mean value formulas are also considered.

\medskip
By choosing adequate test functions we shall prove the mean value formula, for balls centered at the hyperplane $y=0$, for weak solutions $v$ of $L_a v=0$.

\medskip
The above considerations would only allow mean values  for solutions with balls centered at such small sets as the hyperplane $y=0$ of $\zR^{n+1}$. But it turns out that this suffice to get mean value formulas for solutions of $(-\triangle)^{s}f=0$.

\medskip
In \cite{Silvestretesis} a mean value formula is proved as Proposition~2.2.13,  see also \cite{Landkofbook}.  In order obtain improvement results for the Besov regularity of solutions of $(-\triangle)^{s}f=0$ in the spirit of \cite{DaDeV97}  and \cite{AGconstructive}, our formula seems to be more suitable because we can get explicit estimates for the gradients of the mean value kernel. Regarding Besov regularity of harmonic functions see also \cite{JeKe95}.

\medskip
The paper is organized in three sections. In the first one we prove mean value formulas for solutions of $L_{a}u=0$ at the points on the hyperplane $y=0$ of $\zR^{n+1}$. The second section is devoted to apply the result in Section~\ref{sec:meanvaluedivergence} in order to obtain a nonlocal mean value formula for solutions of $(-\triangle)^{s}f=0$ on domains of $\zR^{n}$.
Finally, in Section~\ref{sec:gradientestimates}, we use the above results to obtain a Besov regularity improvement for solutions of $(-\triangle)^{s}f=0$ in Lipschitz domains of $\zR^{n}$. At this point we would like to mention the recent results in \cite{NOS} in relation with the rate of convergence of nonlinear approximation methods observed by Dahlke and DeVore in the harmonic case.

\section{Mean value formula for solutions of $L_{a}u=0$}\label{sec:meanvaluedivergence}

Let $D$ be a domain in $\zR^{n}$. Let $\Omega$ be the open set in $\zR^{n+1}$ given by $\Omega=D\times(-d,d)$ with $d$ the diameter of $D$. Notice that for $x\in D$ and $r>0$ such that $B(x,r)\subset D$, then $S((x,0),r)\subset\Omega$ where $B$ denotes balls in $\zR^{n}$ and $S$ denotes the balls in $\zR^{n+1}$. With $H^1(\abs{y}^{a})$ we denote the Sobolev space of those functions in $L^2(\abs{y}^a dx dy)$ for which $\nabla f$ belongs to $L^2(\abs{y}^a dx dy)$.

A weak solution $v$ of $L_{a}v=0$ in $\Omega$ is a function in the weighted Sobolev space $H^{1}(|y|^{a})$, such that

\begin{displaymath}
\iint_{\Omega}\nabla v\cdot \nabla \psi |y|^{a} \ dx dy=0
\end{displaymath}
for every test function $\psi$ supported in $\Omega$.

The main result of this section is contained in the next statement. As in \cite{CaSi2007} we shall use $X$ to denote the points $(x,y)$ in $\zR^{n+1}$ with $x\in\zR^{n}$ and $y\in\zR$. For $x\in D$ with $\delta(x)$ we shall denote the distance from $x$ to $\partial D$.

\begin{theorem}\label{teovmv}
Let $v$ be a weak solution of $L_{a}v=0$ in $\Omega$. Let $\varphi(X)=\eta(\abs{X})$, $\eta\in C_{0}^{\infty}(\zR^{+})$ supported in the interval $\cii\frac{1}{4},\frac{3}{4}\cdd$ and $\iint_{\zR^{n+1}}\varphi(X)|y|^{a}\ d X=1$ be given. If $x\in D$ and $0<r<\delta(x)$, then
\begin{displaymath}
v(x,0)=\iint_{\Omega}\varphi_{r}(x-z,-y)v(z,y)|y|^{a} dz dy
\end{displaymath}
with
\begin{displaymath}
\varphi_{r}(X)=\frac{1}{r^{n+1+a}}\varphi\pii \frac{X}{r} \pdd.
\end{displaymath}
\end{theorem}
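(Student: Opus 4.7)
The plan is to set
\begin{equation*}
J(r) := \iint_\Omega \varphi_r(x-z,-y)\,v(z,y)\,|y|^a\,dz\,dy
\end{equation*}
and prove (i) $J(r)\to v(x,0)$ as $r\to 0^+$ and (ii) $J$ is constant on $(0,\delta(x))$. Since $\varphi$ is radial, the substitution $X=(z-x,y)$ gives $J(r)=\iint \varphi_r(X)\,v((x,0)+X)\,|X_y|^a\,dX$, and the support of $\eta$ in $[1/4,3/4]$ guarantees that the integrand is supported in $\{|X|\le 3r/4\}\subset\Omega$ whenever $r<\delta(x)$. Rescaling $X=rY$ eliminates the $r$-dependence in the weight and yields
\begin{equation*}
J(r)=\iint\varphi(Y)\,v((x,0)+rY)\,|Y_y|^a\,dY;
\end{equation*}
the normalization of $\varphi$ together with the H\"{o}lder continuity of $v$ at $(x,0)$, available from Fabes--Kenig--Serapioni because $|y|^a\in A_2(\zR^{n+1})$, yields (i) via dominated convergence.

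For (ii), I use the absolute continuity on lines property of $v\in H^{1}(|y|^a)$ to write, for $0<r_0<r<\delta(x)$,
\begin{equation*}
v((x,0)+rY)-v((x,0)+r_0 Y)=\int_{r_0}^{r}\nabla v((x,0)+tY)\cdot Y\,dt
\end{equation*}
for a.e.\ $Y$, and then Fubini reduces the claim $J(r)=J(r_0)$ to showing that
\begin{equation*}
K(t):=\iint\varphi(Y)\,|Y_y|^a\,\nabla v((x,0)+tY)\cdot Y\,dY
\end{equation*}
vanishes for every $t\in(0,\delta(x))$. The decisive observation is that the radial vector field $\varphi(Y)\,Y$ is itself a gradient: with $\zeta(\rho):=\int_0^{\rho} s\,\eta(s)\,ds$ and $C_\eta:=\zeta(\infty)$, the function $\psi(Y):=\zeta(|Y|)$ satisfies $\nabla\psi(Y)=\varphi(Y)\,Y$, vanishes on $\{|Y|\le 1/4\}$, and equals the constant $C_\eta$ on $\{|Y|\ge 3/4\}$. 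Unscaling, define the test function
\begin{equation*}
\Psi_t(Z) := t^{\,1-n-a}\bigl[\psi\bigl((Z-(x,0))/t\bigr)-C_\eta\bigr],
\end{equation*}
which is smooth and compactly supported in the closed $(n+1)$-ball of radius $3t/4$ about $(x,0)$, hence in $\Omega$. A direct computation gives $\nabla\Psi_t(Z)=(Z-(x,0))\,\varphi_t(Z-(x,0))$, so the change of variables $Z=(x,0)+tY$ in $K(t)$ produces $K(t)=t^{-1}\iint |Z_y|^a\,\nabla v(Z)\cdot\nabla\Psi_t(Z)\,dZ$, which vanishes by the weak formulation of $L_a v=0$.

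The main obstacle is locating the correct primitive $\psi$ of the radial vector field $\varphi(Y)\,Y$ and subtracting precisely the constant $C_\eta$ so that the resulting $\Psi_t$ has compact support \emph{strictly inside} $\Omega$ rather than merely being bounded; both steps depend on the radial symmetry of $\varphi$ and on the hypothesis $r<\delta(x)$ together with the annular support of $\eta$.
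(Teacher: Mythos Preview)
Your proof is correct and follows essentially the same route as the paper: both define the primitive $\psi(Y)=\zeta(|Y|)$ with $\zeta'(\rho)=\rho\,\eta(\rho)$, subtract the constant $C_\eta=\int_0^\infty s\,\eta(s)\,ds$ to obtain a compactly supported test function, and then use the weak equation to kill the radial derivative of $J(r)$. The only cosmetic difference is that the paper differentiates $J$ directly under the integral (invoking $\nabla v\in L^2(|y|^a)$), whereas you reach the same integrand via absolute continuity on lines and Fubini; the two justifications are equivalent here.
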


\begin{proof}
Set $A=\int_{0}^{\infty}\rho \eta(\rho) d\rho$ and $\zeta(t)=\int_{0}^{t}\rho\eta(\rho) d\rho-A$. Notice that $\zeta(t)\equiv 0$ for $t\geq \frac{3}{4}$ and $\zeta(t)\equiv-A$ for $0\leq t\leq \frac{1}{4}$. The function $\psi(X)=\zeta(\abs{X})$ is, then, in $C^{\infty}(\zR^{n+1})$ and has compact support in the ball $S((0,0),1)$. It is easy to check that $\nabla\psi(X)=\varphi(X)X$. Take now $x\in D$ and $0<r<\delta(x)$. Set $\varphi_{r}(Z)=r^{-n-1-a}\varphi(r^{-1}Z)$, $Z\in\zR^{n+1}$, and define

\begin{displaymath}
\Phi_{x}(r)=\iint_{\Omega}\varphi_{r}(X-Z)v(Z)|y|^{a} dZ,
\end{displaymath}
where $X=(x,0)$, $Z=(z,y)$, $dZ=dzdy$ and $v$ is a weak solution of $L_{a}v=0$ is $\Omega$. As usual, we aim to prove that $\Phi_{x}(r)$ is a constant function of $r$ and that $\lim\limits_{r\rightarrow 0}\Phi_{x}(r)=v(X)$. From the results in \cite{FaKeSe82} with $w(Z)=|y|^{a}$, which belongs to the Muckenhoupt class $A_{2}(\zR^{n+1})$ when $-1<a<1$, we know that $v$ is H\"{o}lder continuous on each compact subset of $\Omega$. Then the convergence $\Phi_{x}(r)\rightarrow v(X)=v(x,0)$ as $r\rightarrow 0$, follows from the fact that
\begin{displaymath}
\iint\varphi_{r}(Z)|y|^{a} dZ=\frac{1}{r^{a+1+n}}\iint\varphi\pii\frac{z}{r},\frac{y}{r}\pdd|y|^{a} dz dy =1.
\end{displaymath}

In order to prove that $\Phi_{x}(r)$ is constant as a function of $r$ we shall take its derivative with respect to $r$ for fixed $x$. Notice first that
\begin{displaymath}
\Phi_{x}(r)=\iint_{S((0,0),1)}\varphi(Z)v(X-rZ)|y|^{a} dz dy.
\end{displaymath}
Since $\nabla v\in L^{2}(|y|^{a} dX)$ we have

\begin{displaymath}
\begin{split}
\frac{d}{dr}\Phi_{x}(r)&=-\iint_{S((0,0),1)}\varphi(Z)\nabla v(X-rZ)\cdot Z|y|^{a} dZ\\
&=-\iint_{S((0,0),1)}\nabla v(X-rZ)\cdot\nabla\psi(Z)|y|^{a} dZ\\
&=-\frac{1}{r^{a+1+n}}\iint_{\Omega}\nabla v(Z)\cdot\nabla\psi\pii\frac{X-Z}{r} \pdd|y|^{a} dZ\\
&=\iint_{\Omega}\nabla v(Z)\cdot\nabla\cii \frac{1}{r^{n+a}}\psi\pii\frac{X-Z}{r} \pdd \cdd|y|^{a} dZ,
\end{split}
\end{displaymath}
which vanishes since $\frac{1}{r^{n+a}}\psi\pii \frac{X-Z}{r} \pdd$ as a function of $Z$ is a test function for the fact that $v$ solves $L_{a}v=0$ in $\Omega$.
\end{proof}

\section{Mean value formula for solutions of $(-\triangle)^{s}f=0$}\label{sec:meanvaluelaplacian}

In this section we shall use the results and we shall closely follow the notation in \cite{CaSi2007}. Take $f\in L^{1}\bigl(\zR^{n},\frac{dx}{(1+|x|)^{n+2s}} \bigr)$ with $(-\triangle)^{s}f=0$ on the domain $D\in\zR^{n}$. Then, with $u(x,y)=\pii P_{y}^{a}\ast f \pdd(x)$ and $P_{y}^{a}(x)=C y^{1-a}\pii |x|^{2} + y^{2} \pdd^{-\tfrac{n+1-a}{2}}$ the function
\begin{displaymath}
v(x,y)=\left\{ \begin{array}{cc}
 u(x,y) &    \text{in}\ D\times \zR^{+}  \\
u(x,-y) & \text{in}\ D\times \zR^{-}
\end{array}\right.
\end{displaymath}
is a weak solution of $L_{a}v=0$ in $D\times\zR$. In particular $v$ is H\"{o}lder continuous in $D\times\zR$ from the results in \cite{FaKeSe82}. Theorem~\ref{teovmv} guarantees that, for $0<r<\delta(x)$ and $x\in D$,

\begin{equation}\label{form.mv.f}
f(x)=u(x,0)=v(x,0)=\iint\varphi_{r}(X-Z)v(Z)|y|^{a} dZ
\end{equation}
where, as before, $X=(x,0)$ and $Z=(z,y)$. On the other hand, the definitions of $v$ and $u$ provide the formula
\begin{equation}\label{form.mv.f1}
v(Z)=v(z,y)=\pii P_{\abs{y}}^{a}\ast f \pdd(z).
\end{equation}

Replacing \eqref{form.mv.f1} in \eqref{form.mv.f}, provided that the interchange of the order of integration holds, we obtain the main result of this section.

\begin{theorem}\label{teopri}
Let $0<s<1$ be given. Assume that $D$ in an open set in $\zR^{n}$
on which $(-\triangle)^{s}f=0$. Then for every $x\in D$ and every
$0<r<\delta(x)$ we have that $f(x)=\pii\Phi_{r}\ast f
\pdd(x)$, where $\Phi_{r}(x)=r^{-n}\Phi\pii\frac{x}{r}
\pdd$,
$\Phi(x)=\int_{y\in\zR}\int_{z\in\zR^{n}}\varphi(z,-y)P^{a}_{|y|}(x-z)|y|^{a}\
dz dy$, $\varphi_{r}(x,y)=r^{-(n+1+a)}\varphi\pii
\frac{x}{r},\frac{y}{r} \pdd$, $\varphi$ is a
$C^{\infty}(\zR^{n+1})$ radial function supported in the unit ball
of $\zR^{n+1}$ with $\iint_{\zR^{n+1}}\varphi(x,y)|y|^{a}\
dx dy=1$ and $P^{a}_{y}$ is a constant times
$y^{1-a}\pii |x|^{2}+y^{2}\pdd^{-\frac{n+1-a}{2}}$.
\end{theorem}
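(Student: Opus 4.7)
The plan is to assemble the result by substituting the Caffarelli--Silvestre Poisson representation \eqref{form.mv.f1} for $v$ into the mean value formula \eqref{form.mv.f} and then interchanging the order of integration. Since the body of the argument is already set up in the preamble of the section, the entire content of the theorem is an application of Fubini plus a scaling computation.

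\textbf{Step 1 (Setup).} Starting from \eqref{form.mv.f}, which we may invoke because the even reflection $v$ is a weak solution of $L_{a}v=0$ in $D\times\zR$ and Theorem~\ref{teovmv} applies on any ball $S((x,0),r)$ with $0<r<\delta(x)$, I would write
\begin{displaymath}
f(x)=\iint_{\zR^{n+1}}\varphi_{r}(x-z,-y)\Bigl(\int_{\zR^{n}}P^{a}_{|y|}(z-w)f(w)\,dw\Bigr)|y|^{a}\,dz\,dy.
\end{displaymath}
The goal is to rewrite this as a convolution in the $w$ variable.

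\textbf{Step 2 (Justifying Fubini).} The key analytic step is to verify that the triple integral
\begin{displaymath}
\iiint |\varphi_{r}(x-z,-y)|\, P^{a}_{|y|}(z-w)\, |f(w)|\,|y|^{a}\,dw\,dz\,dy
\end{displaymath}
is finite. The support of $\varphi_{r}$ confines $(z,y)$ to a compact set around $(x,0)$ of diameter $r$, so on this set $|y|\leq r$ and $|z|\leq|x|+r$. On the other hand, the explicit form $P^{a}_{|y|}(z-w)=C|y|^{1-a}(|z-w|^{2}+y^{2})^{-(n+1-a)/2}$ together with $2s=1-a$ and $n+1-a=n+2s$ yields the bound $P^{a}_{|y|}(z-w)\lesssim (1+|w|)^{-(n+2s)}$ uniformly in $(z,y)$ in the support of $\varphi_{r}$ and uniformly for $w$ outside a large ball. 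Hence the integrability condition $f\in L^{1}\bigl(\zR^{n},dw/(1+|w|)^{n+2s}\bigr)$ is precisely what is required, and Fubini's theorem applies. After the interchange I get
\begin{displaymath}
f(x)=\int_{\zR^{n}} f(w)\,K_{r}(x-w)\,dw,
\qquad
K_{r}(u)=\iint \varphi_{r}(u-\tilde z,-y)\,P^{a}_{|y|}(\tilde z)\,|y|^{a}\,d\tilde z\,dy,
\end{displaymath}
via the translation $\tilde z=z-w$.

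\textbf{Step 3 (Scaling).} It remains to show $K_{r}(u)=r^{-n}\Phi(u/r)$ with $\Phi$ as stated. I would substitute $\tilde z=r\zeta$, $y=r\eta$ in $K_{r}$ and use the scaling $P^{a}_{r|\eta|}(r\zeta)=r^{-n}P^{a}_{|\eta|}(\zeta)$ (a direct check from the explicit kernel), along with $d\tilde z\,dy=r^{n+1}d\zeta\,d\eta$, $|r\eta|^{a}=r^{a}|\eta|^{a}$, and the definition $\varphi_{r}(X)=r^{-(n+1+a)}\varphi(X/r)$. The powers of $r$ cancel to give $r^{-n}$ in front of a double integral in $(\zeta,\eta)$ with argument $u/r-\zeta$ in the $\varphi$ slot. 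A final change $\zeta'=u/r-\zeta$ produces exactly $\Phi(u/r)$.

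The main obstacle is the Fubini justification in Step~2: the tail decay of the Poisson kernel $P^{a}_{|y|}$ must match the weighted integrability of $f$, and it is reassuring that the matching is exact, $n+1-a=n+2s$, so that the natural class for $f$ is automatically the right one. The remaining steps are a compact-support argument and bookkeeping of homogeneity.
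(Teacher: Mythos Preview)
Your proposal is correct and follows essentially the same route as the paper: substitute the Poisson representation into the mean value formula from Theorem~\ref{teovmv}, justify Fubini via the decay $P^{a}_{|y|}(z-w)\lesssim (1+|w|)^{-(n+2s)}$ on the support of $\varphi_{r}$ matched against the assumed integrability of $f$, and then identify the resulting kernel as $r^{-n}\Phi(\cdot/r)$ by a scaling change of variables. The paper makes the Fubini step slightly more explicit by splitting into a near region (where the $L^{1}$ normalization of $P^{a}_{|y|}$ gives boundedness) and a far region (where the pointwise decay is used), but this is exactly the content of your ``outside a large ball'' remark.
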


\begin{proof}
Inserting \eqref{form.mv.f1} in \eqref{form.mv.f} we have
\begin{displaymath}
\begin{split}
f(x)&=v(x,0)=\iint \varphi_{r}(x-z,-y)v(z,y)\ |y|^{a}dz dy\\
&=\iint\varphi_{r}(x-z,y)(P^{a}_{|y|}\ast f)(z)|y|^{a} dz dy\\
&=\int_{y\in\zR}\int_{z\in\zR^{n}}\varphi_{r}(x-z,-y)\pii
\int_{\bar{z}\in
\zR^{n}} P^{a}_{|y|}(z-\bar{z})f(\bar{z})\ d\bar{z} \pdd|y|^{a} dz  dy\\
&=\int_{\bar{z}\in\zR^{n}}\pii
\int_{y\in\zR}\int_{z\in\zR^{n}} \varphi_{r}(x-z,-y) P^{a}_{|y|}(z-\bar{z})
|y|^{a} dz  dy \pdd f(\bar{z}) d\bar{z}\\
&=\int_{\bar{z}\in\zR^{n}}\Phi_{r}(x,\bar{z})f(\bar{z}) d\bar{z},
\end{split}
\end{displaymath}
with $\Phi_r(x,\bar{z})=\int_{y\in \mathbb{R}}\int_{z\in \mathbb{R}^n}\varphi_r(x-z,-y)P^a_{\abs{y}}(z-\bar{z})\abs{y}^a dz dy$.
The last equality in the above formula follows from the fact that $\tfrac{f(\bar{z})}{(1+\abs{\bar{z}}^2)^{\tfrac{n+1-a}{2}}}$
is integrable in $\mathbb{R}^n$, since
\begin{equation*}
\int_{y\in\zR}\int_{z\in\zR^{n}}|\varphi(x-z,-y)|P^{a}_{|y|}(z-\bar{z})|y|^{a} dz dy\leq \frac{C}{(1+\abs{\bar{z}}^2)^{\tfrac{n+1-a}{2}}}
\end{equation*}
for some positive constant $C$. In fact, on one hand
\begin{align}\label{eq:estimatePhi}
\int_{y\in\zR}\int_{z\in\zR^{n}} &|\varphi(x-z,-y)| P^{a}_{|y|}(z-\bar{z})|y|^{a} dz dy\notag\\
& \leq \int_{-1}^{1} \norm{\varphi(x-\cdot,y)}_{L^{\infty}}\norm{P^{a}_{|y|}(\cdot-\bar{z})}_{L^1}\abs{y}^a dy\leq C;
\end{align}
on the other, for $\abs{\bar{z}-x}>2$ we have
\begin{align}\label{eq:estimatePhiawayorigin}
\int_{y\in\zR}\int_{z\in\zR^{n}}|& \varphi(x-z,-y)| P^{a}_{|y|}(z-\bar{z})|y|^{a} dz dy \notag\\
& \leq C\iint_{S((x,0),1)}\frac{\abs{y}}{(y^2+\abs{z-\bar{z}}^2)^{\tfrac{n+1-a}{2}}} dz dy \notag\\
& \leq \frac{C}{\abs{x-\bar{z}}^{n+1-a}}.
\end{align}
So that $\Phi_r(x,\bar{z})\leq \frac{C(r)}{(1+\abs{x-\bar{z}})^{n+1-a}}\leq \frac{C(x,r)}{(1+\abs{x})^{n+1-a}}$, hence $\int\Phi_{r}(x,\bar{z})f(\bar{z}) d\bar{z}$ is absolutely convergent.
It remains to prove that $\Phi_r(x,\bar{z})=\tfrac{1}{r^n}\Phi(\tfrac{x-\bar{z}}{r})$ with
$\Phi(x)=\int_{y\in\zR}\int_{z\in\zR^{n}}\varphi(z,-y)P^{a}_{|y|}(x-z)|y|^{a} dz dy$. Let us compute $\Phi(\tfrac{x-\bar{z}}{r})$
changing variables. First in $\mathbb{R}^n$ with $\nu=x-rz$, then in $\mathbb{R}$ with $t=ry$,
\begin{align*}
\Phi\left(\frac{x-\bar{z}}{r}\right) &= \int_{y\in\zR}\int_{z\in\zR^{n}}\varphi(z,-y)P^{a}_{|y|}\Bigl(\frac{x-\bar{z}-rz}{r}\Bigr)|y|^{a} dz dy\\
&=\int_{y\in\zR}\int_{\nu\in\zR^{n}}\frac{1}{r^n}\varphi\Bigl(\frac{x-\nu}{r},-y\Bigr)P^{a}_{|y|}\Bigl(\frac{\nu-\bar{z}}{r}\Bigr)|y|^{a} d\nu dy\\
&=\int_{t\in\zR}\int_{\nu\in\zR^{n}}\frac{1}{r^{n+1+a}}\varphi\Bigl(\frac{x-\nu}{r},-\frac{t}{r}\Bigr)P^{a}_{\abs{\tfrac{t}{r}}}\Bigl(\frac{\nu-\bar{z}}{r}\Bigr)|t|^{a} d\nu dt\\
&= r^n\int_{t\in\zR}\int_{\nu\in\zR^{n}}\varphi_r(x-\nu,-t)P^{a}_{\abs{t}}(\nu-\bar{z})|t|^{a} d\nu dt\\
&= r^n\Phi_r(x,\bar{z}),
\end{align*}
as desired.
\end{proof}

We collect in the next result some basic properties of the mean value kernel $\Phi$.

\begin{proposition}\label{propo:propertiesPhi}
The function $\Phi$ defined in the statement of Theorem~\ref{teopri} satisfies the following properties.
\begin{enumerate}[(a)]
\item $\Phi(x)$ is radial;
\item $(1+\abs{x})^{n+1-a}\abs{\Phi(x)}$ is bounded;
\item $\int_{\mathbb{R}^n}\Phi(x) dx=1$;
\item $\sup_{r>0}\abs{(\Phi_r\ast f)(x)}\leq c Mf(x)$, where $M$ is the Hardy-Littlewood maximal opera\-tor in
$\mathbb{R}^n$;
\item if $\Psi^{i}(x)=\frac{\partial \Phi}{\partial x_{i}}(x)$,
then $\Psi^{i}(0)=0$ and $\int\Psi^{i}(x)\ dx=0$;
\item for some constant $C>0$, $\abs{\Psi^{i}(x)}\leq
\frac{C}{\abs{x}^{n+2-a}}$ for $\abs{x}>2$;
\item $\abs{\nabla \Psi^{i}}$ is bounded on $\zR^{n}$ for every
$i=1,\ldots,n$.
\end{enumerate}
\end{proposition}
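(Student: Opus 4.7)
For (a), the plan is to use orthogonal invariance: if $O \in O(n)$, substitute $z \mapsto Oz$ in the inner integral defining $\Phi$. Since $\varphi$ is radial in $\mathbb{R}^{n+1}$, $\varphi(Oz,-y) = \varphi(z,-y)$, and since $P^a_{|y|}$ depends only on $|x-z|$, we have $P^a_{|y|}(Ox-Oz) = P^a_{|y|}(x-z)$. Hence $\Phi(Ox)=\Phi(x)$. For (b), I would invoke the estimates \eqref{eq:estimatePhi} and \eqref{eq:estimatePhiawayorigin} already established in the proof of Theorem~\ref{teopri} (taking $r=1$, $\bar z=0$): the first gives a uniform bound near $0$, the second gives decay $|\Phi(x)|\leq C|x|^{-(n+1-a)}$ for $|x|>2$, and these combine to (b). For (c), apply Fubini and exchange the order of integration; the inner integral $\int_{\mathbb{R}^n} P^a_{|y|}(x-z)\,dx = 1$ is the standard Poisson-kernel normalization (also visible from the scaling $x=|y|u$), leaving $\iint \varphi(z,-y)|y|^a\,dz\,dy=1$ by the normalization of $\varphi$ (and the evenness in $y$).

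For (d), the decay in (b) together with $n+1-a>n$ shows that $\Phi$ admits the integrable, radial, decreasing majorant $C(1+|x|)^{-(n+1-a)}$, so the standard estimate for approximations of the identity (Stein) gives the pointwise bound by $Mf$. For (e), $\Phi$ radial and continuously differentiable near the origin forces $\nabla\Phi(0)=0$, so $\Psi^i(0)=0$; and since $\Phi\in L^1(\mathbb{R}^n)$ with $(n+1-a)>n$, a standard exhaustion argument gives $\int\Psi^i\,dx=\int \partial_{x_i}\Phi\,dx=0$ (alternatively, differentiating the identity $\int \Phi\,dx=1$).

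For (f), I would differentiate under the integral sign (legal since $\varphi$ is $C^\infty_c$), obtaining $\Psi^i(x)=\int \varphi(z,-y)\,\partial_{x_i}P^{a}_{|y|}(x-z)\,|y|^a\,dz\,dy$. A direct computation gives
\[
|\partial_{x_i}P^{a}_{y}(x)|\leq C\,y^{1-a}(|x|^2+y^2)^{-(n+2-a)/2}.
\]
For $|x|>2$ and $(z,y)$ in the unit ball we have $|x-z|\geq |x|/2$, hence $|\Psi^i(x)|\leq C|x|^{-(n+2-a)}\int |\varphi(z,-y)|\,|y|\,dz\,dy$, which is the claimed decay.

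The delicate part is (g). The plan is to integrate by parts twice, moving both derivatives off the Poisson kernel and onto $\varphi$: using $\partial_{x_i}P^a_{|y|}(x-z)=-\partial_{z_i}P^a_{|y|}(x-z)$, one obtains
\[
\partial_j\Psi^i(x)=\iint \partial_{z_iz_j}\varphi(z,-y)\,P^a_{|y|}(x-z)\,|y|^a\,dz\,dy,
\]
with no boundary terms since $\varphi$ is compactly supported. Boundedness then reduces to showing that
\[
I(x):=\iint_{\mathrm{supp}\,\varphi} P^a_{|y|}(x-z)\,|y|^a\,dz\,dy
\]
is bounded uniformly in $x$. For $|x|>2$ this is covered by \eqref{eq:estimatePhiawayorigin}. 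For $|x|\leq 2$, substituting $u=x-z$ and writing the integrand as $C|y|(|u|^2+y^2)^{-(n+1-a)/2}$, polar coordinates in $(u,y)\in\mathbb{R}^{n+1}$ with $\rho=|(u,y)|$ and $|y|/\rho\leq 1$ produce $\int_0^{C}\rho^{a}\,d\rho<\infty$, which is finite precisely because $a>-1$. The main obstacle is exactly this local integrability near the moving singularity $(z,y)=(x,0)$ inside the support of $\varphi$; the Muckenhoupt/weighted exponent $-1<a<1$ is what makes the argument work.
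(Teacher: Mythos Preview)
Your argument is correct and matches the paper's proof in structure for (a), (b), (d), (e), and (f). Two small differences are worth noting. For (c) the paper simply applies Theorem~\ref{teopri} to $f\equiv 1$ (which lies in $L^1\bigl(\zR^n,(1+|x|)^{-(n+2s)}dx\bigr)$ and satisfies $(-\triangle)^s 1=0$) to obtain $1=(\Phi_r\ast 1)(x)=\int\Phi$; your Fubini argument is of course equivalent. For (g) you work harder than necessary: once you have moved both derivatives onto $\varphi$, the paper bounds exactly as in \eqref{eq:estimatePhi}, namely
\[
\abs{\partial_j\Psi^i(x)}\leq \int_{-1}^{1}\norm{\partial_{z_iz_j}\varphi(\cdot,-y)}_{L^\infty}\norm{P^{a}_{|y|}}_{L^1(\zR^n)}\,|y|^a\,dy
= C\int_{-1}^{1}|y|^a\,dy<\infty,
\]
using only that the Poisson kernel has unit $L^1$-norm. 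Your polar-coordinate computation in $\zR^{n+1}$ reaches the same conclusion (and correctly identifies $a>-1$ as the reason for finiteness), but the step is not delicate once you remember $\norm{P^a_{|y|}}_{L^1}=1$.
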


\begin{proof}
Let $\rho$ be a rotation of $\mathbb{R}^n$, then
\begin{displaymath}
\begin{split}
\Phi(\rho
x)&=\int_{y\in\zR}\int_{z\in\zR^{n}}\varphi(z,-y)P^{a}_{|y|}(\rho
x-z)|y|^{a} dz  dy\\
&=\int_{y\in\zR}\int_{z\in\zR^{n}}\varphi(\rho^{-1}z,-y)P^{a}_{|y|}(\rho^{-1}(\rho
x-z))|y|^{a} dz dy\\
&=\int_{y\in\zR}\int_{z\in\zR^{n}}\varphi(\rho^{-1}z,-y)
P^{a}_{|y|}(x-\rho^{-1}z)|y|^{a} dz dy\\
&=\int_{y\in\zR}\int_{\bar{z}\in\zR^{n}}\varphi(\bar{z},-y)
P^{a}_{|y|}(x-\bar{z})|y|^{a} d\bar{z} dy\\
&=\Phi(x),
\end{split}
\end{displaymath}
which proves \textit{(a)}. Part \textit{(b)} has already been proved in \eqref{eq:estimatePhi} and \eqref{eq:estimatePhiawayorigin}. By taking $f\equiv 1$ in Theorem~\ref{teopri} we get \textit{(c)}.
From \textit{(a)} and \textit{(c)} the estimate of the maximal operator is a classical result (see \cite{Stein70}).
Item \textit{(e)} follows from the fact that $\Phi$ is radial and
smooth and from \textit{(c)}.

Let us now show that $|\Psi^{i}(x)|\leq \frac{C}{|x|^{n+2-a}}$ for $|x|>2$. In fact,

\begin{align}\label{acot.origen.com}
|\Psi^{i}(x)|&= 2\abs{\int_{0}^{\infty}\int_{z\in\zR^{n}}\frac{\partial\varphi}{\partial x_{i}}(z,y) P_{y}^{a}(x-z)y^{a} dz dy}\notag\\
&= 2\abs{\int_{0}^{1}\int_{z\in B(0,1)}\varphi\pii z,y \pdd \frac{\partial}{\partial x_{i}}\pii P_{y}^{a}(x-z)y^{a}\pdd dz dy}\notag\\
&\leq C\int_{0}^{1}\int_{z\in B(0,1)}|\varphi(z,y)|\frac{1}{|x-z|^{n+2-a}} dz dy\notag\\
&\leq \frac{C}{(|x|-1)^{n+2-a}}\int_{0}^{1}\int_{z\in B(0,1)}|\varphi(z,y)| dz dy\notag\\
&\leq \frac{C}{|x|^{n+2-a}}.
\end{align}
By taking the derivatives of the function $\varphi$ the proof of \textit{(g)} proceeds as in \eqref{eq:estimatePhi}.
\end{proof}

\section{Maximal estimates for gradients of solutions of $(-\triangle)^{s}f=0$ in open domains and the improvement of Besov regularity}\label{sec:gradientestimates}

The mean value formula proved in Section~\ref{sec:meanvaluelaplacian} for solutions of
$(-\triangle)^{s}f=0$ in an open domain $D$ of $\zR^{n}$ can be used to obtain improvement of Besov regularity of $f$.
Here we illustrate how  Theorem~\ref{teopri} can be used to get a result in the lines introduced by Dahlke
and DeVore for harmonic functions. We shall prove the following result.

\begin{theorem}\label{teo:improvementBesov}
Let $D$ be a bounded Lipschitz domain in $\zR^{n}$. Let $0<s<1$. Let $1<p<\infty$ and
$0<\lambda<\frac{n-1}{n}$ be given. Assume that $f\in
B_{p}^{\lambda}(\zR^{n})$ and that $(-\triangle)^{s}f=0$ on $D$,
then $f\in B_{\tau}^{\alpha}(D)$ with
$\frac{1}{\tau}=\frac{1}{p}+\frac{\alpha}{n}$ and
$0<\alpha<\lambda\frac{n}{n-1}$.
\end{theorem}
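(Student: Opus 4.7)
The plan is to follow the nonlinear approximation strategy of Dahlke and DeVore for harmonic functions in Lipschitz domains, but replacing the classical local interior gradient estimates (which rest on the harmonic mean value property) with interior estimates extracted from the nonlocal mean value formula of Theorem~\ref{teopri}. The chain of implications is: mean value formula $\Rightarrow$ pointwise interior gradient bound $\Rightarrow$ nonlinear $N$-term approximation on a Whitney decomposition of $D$ $\Rightarrow$ membership in $B_\tau^\alpha(D)$.

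The first step is to read off a quantitative gradient estimate from Theorem~\ref{teopri}. Differentiating the identity $f=\Phi_r\ast f$, valid on $\{\delta(\cdot)>r\}$, and choosing $r=\tfrac{1}{2}\delta(x)$ yields
\[
\partial_i f(x)\ =\ r^{-1}(\Psi^i_r\ast f)(x),\qquad \Psi^i_r(y)=r^{-n}\Psi^i(y/r).
\]
Because $\int \Psi^i\,dy=0$ by Proposition~\ref{propo:propertiesPhi}(e), I would rewrite this convolution as the integral of the first differences $f(x-y)-f(x)$ against $\Psi^i_r$. Splitting into the near region $|y|\le 2r$, on which $\Psi^i_r$ is bounded by $Cr^{-n}$, and the tail $|y|>2r$, on which (f) gives $|\Psi^i_r(y)|\le Cr^{2-a}|y|^{-n-2+a}$, and then using the Besov hypothesis $f\in B_p^\lambda(\mathbb{R}^n)$ to control the $L^p$ moduli of $f$ on dyadic shells, I expect a pointwise bound of the form
\[
|\nabla f(x)|\ \le\ C\,\delta(x)^{-1}\,G(x)\qquad (x\in D),
\]
with $G\in L^p(\mathbb{R}^n)$ and $\|G\|_{L^p}\le C\,\|f\|_{B_p^\lambda(\mathbb{R}^n)}$ (for example, $G$ a shifted Hardy--Littlewood or Peetre maximal function of $f$). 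This plays the role of the classical harmonic interior estimate $|\nabla f(x)|\le C\delta(x)^{-1}\|f\|_{\infty}$.

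Next, I would fix a Whitney decomposition $\{Q_j\}$ of the Lipschitz domain $D$, so that $\ell(Q_j)\sim\mathrm{dist}(Q_j,\partial D)$. The gradient bound above upgrades, through a standard Poincar\'e argument on each cube, to
\[
\inf_{c\in\mathbb{R}}\|f-c\|_{L^p(Q_j)}\ \le\ C\,\ell(Q_j)\,\|\nabla f\|_{L^p(Q_j)}\ \le\ C\,\|G\|_{L^p(\widehat{Q}_j)},
\]
where $\widehat{Q}_j$ is a mild enlargement of $Q_j$. Assembling the best piecewise-constant (or low-order polynomial) approximations on the $N$ largest Whitney cubes produces a global nonlinear approximation of $f$ on $D$; the Jackson--Bernstein identification of Besov spaces with such approximation spaces then gives $f\in B_\tau^\alpha(D)$ provided that $\sum_j \ell(Q_j)^{\tau\alpha}\|G\|_{L^p(\widehat{Q}_j)}^{\tau}$ is finite. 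The Lipschitz geometry of $\partial D$ contributes at most $C\,2^{k(n-1)}$ Whitney cubes of side $2^{-k}$; combining this count with H\"older's inequality at each scale and with the relation $\tfrac{1}{\tau}-\tfrac{1}{p}=\tfrac{\alpha}{n}$ produces a geometric series that converges exactly when $\alpha<\lambda\,\tfrac{n}{n-1}$, yielding the claimed range.

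The main obstacle is the first step. One must translate the kernel estimates of Proposition~\ref{propo:propertiesPhi}(b)--(g) into a gradient bound whose right-hand side is both (a) controlled by the global Besov norm $\|f\|_{B_p^\lambda(\mathbb{R}^n)}$ and (b) compatible with the Whitney summation, in the sense that its local $L^p$ behaviour on the $Q_j$ sums properly. The tail integral is especially delicate because $\Psi^i$ decays only like $|y|^{-n-2+a}$ with $-1<a<1$, so one is working on the edge of integrability and must genuinely exploit the $B_p^\lambda$ smoothness of $f$, not just its $L^p$ size, to close the estimate. Once this gradient bound is in place, the remainder is a now-standard application of the Dahlke--DeVore machinery adapted from the harmonic case to the present nonlocal setting.
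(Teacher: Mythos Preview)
Your overall plan---reduce to Dahlke--DeVore once an interior gradient estimate is in hand---is exactly the paper's strategy. The gap is in the form of the gradient bound you propose. You aim for
\[
|\nabla f(x)|\ \le\ C\,\delta(x)^{-1}G(x),\qquad \|G\|_{L^p(\mathbb{R}^n)}\le C\|f\|_{B_p^\lambda(\mathbb{R}^n)},
\]
and then your Poincar\'e step gives $\inf_c\|f-c\|_{L^p(Q_j)}\le C\|G\|_{L^p(\widehat Q_j)}$ with \emph{no} residual power of $\ell(Q_j)$. But then the parameter $\lambda$ has disappeared from the estimate, and there is no way the subsequent Whitney summation can produce the range $\alpha<\lambda\,\tfrac{n}{n-1}$; the convergence of $\sum_j \ell(Q_j)^{\tau\alpha}\|G\|_{L^p(\widehat Q_j)}^\tau$ under the cube count $\#\{j:\ell(Q_j)=2^{-k}\}\lesssim 2^{k(n-1)}$ yields a threshold depending only on $n,p$, not on $\lambda$.

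What is actually needed, and what the paper proves (Lemma~\ref{lema.acot.max}), is the sharper pointwise bound
\[
|\nabla f(x)|\ \le\ C\,r^{\lambda-1}\,M^{\sharp,\lambda}f(x)\qquad (0<r<\delta(x)),
\]
where $M^{\sharp,\lambda}$ is Calder\'on's sharp maximal function $M^{\sharp,\lambda}f(x)=\sup_{B\ni x}|B|^{-1-\lambda/n}\int_B|f-f(x)|$. The dyadic shell decomposition you sketch does lead here, but only if on each shell of radius $\rho$ you estimate $\rho^{-n}\int_{|x-z|\le\rho}|f(z)-f(x)|\,dz\le \rho^{\lambda}M^{\sharp,\lambda}f(x)$ rather than merely invoking an $L^p$ modulus; this is what produces the extra factor $r^{\lambda}$. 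Taking $r\sim\delta(x)$ and using the DeVore--Sharpley identification $\|M^{\sharp,\lambda}f\|_{L^p}\simeq\|f\|_{B_p^\lambda}$ then gives Lemma~\ref{lemma:normgradientsestimates}, namely $\|\delta^{1-\lambda}\nabla f\|_{L^p(D)}\le C\|f\|_{B_p^\lambda(\mathbb{R}^n)}$. It is this weighted estimate, with the exponent $1-\lambda$ rather than $1$, that plugs into the Dahlke--DeVore wavelet argument and forces the series to converge precisely for $\alpha<\lambda\,\tfrac{n}{n-1}$. So the fix is: replace your unspecified $G$ by $M^{\sharp,\lambda}f$ and track the factor $r^{\lambda}$ through the near/far splitting; the tail decay $|\Psi^i(y)|\le C|y|^{-n-2+a}$ from Proposition~\ref{propo:propertiesPhi}(f) is exactly enough to make the far sum $\sum_j 2^{j(\lambda-2+a)}$ converge for $\lambda<1$ and $a<1$.
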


Here $B_{p}^{\lambda}(\zR^{n})$ and $B_{\tau}^{\alpha}(D)$ denote the standard Besov spaces on $\zR^{n}$ and on $D$ with $p=q$ for the usual notation $B_{p,q}^{\lambda}$ of this scale. Among the several descriptions of these spaces the best suited for our purposes is the characterization through wavelet coefficients \cite{Meyer92waveletsI}.

It is worthy noticing that in contrast with the local cases
associated to the harmonic functions in \cite{DaDeV97} and the
temperatures in \cite{AGconstructive}, now the $B_{p}^{\lambda}$
regularity is required on the whole space $\zR^{n}$ and that the
improvement is only in $D$.

The basic scheme is that in \cite{DaDeV97}, and the central tool is then the estimate contained in the next statement.

\begin{lemma}\label{lemma:normgradientsestimates}
Let $D$ be a domain of $\zR^{n}$.  Let $0<\lambda<1$ and
$1<p<\infty$. For $f\in B_{p}^{\lambda}(\zR^{n})$ with
$(-\triangle)^{s}f=0$ on $D$, we have
\begin{displaymath}
\pii\int_{D}\abs{\delta(x)^{1-\lambda} \nabla f(x)}^{p} dx\pdd^{\frac{1}{p}}\leq C\norm{f}_{B_{p}^{\lambda}(\zR^{n})}
\end{displaymath}
where $\delta(x)$ is the distance from $x$ to the boundary of $D$,
$\nabla f$ is the gradient of $f$ and $C$ is a constant.
\end{lemma}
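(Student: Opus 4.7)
The plan is to differentiate the mean value identity of Theorem~\ref{teopri} and combine the resulting pointwise formula with a dyadic Littlewood-Paley type characterization of $B_p^\lambda(\zR^n)$. Since $\Phi\in C^\infty(\zR^n)$ with controlled decay (Proposition~\ref{propo:propertiesPhi}) and $f\in L^p\subset L^1_{\mathrm{loc}}$, one differentiates $f(x)=(\Phi_r*f)(x)$ under the integral sign and uses $\partial_i\Phi_r(z)=r^{-1}\Psi^i_r(z)$ with $\Psi^i_r(z)=r^{-n}\Psi^i(z/r)$, to obtain
\[\nabla f(x)=\frac{1}{r}\,(\vec\Psi_r*f)(x),\qquad x\in D,\ 0<r<\delta(x),\]
where $\vec\Psi=(\Psi^1,\ldots,\Psi^n)$ and each $\Psi^i$ is the kernel appearing in Proposition~\ref{propo:propertiesPhi}.

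To decouple the $x$-dependent scale I would perform a Whitney decomposition of $D$ into dyadic cubes $\{Q\}$ with $\ell(Q)\sim \mathrm{dist}(Q,\partial D)$ and bounded overlap, grouped by generation $\mathcal{W}_k=\{Q:\ell(Q)\sim 2^{-k}\}$. For every $Q\in\mathcal{W}_k$ pick a common radius $r_k=c\cdot 2^{-k}$, with $c$ small enough that $r_k<\delta(x)$ for every $x\in Q$. Since $\delta(x)\sim 2^{-k}$ on $Q$, the previous identity yields the pointwise bound $\delta(x)^{1-\lambda}|\nabla f(x)|\leq C\,2^{k\lambda}\,|(\vec\Psi_{r_k}*f)(x)|$ on $Q$. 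Raising to the $p$-th power and summing over $Q\in\mathcal{W}_k$ and $k\in\zZ$ gives
\[\int_D\delta(x)^{(1-\lambda)p}|\nabla f(x)|^p\,dx\leq C\sum_{k\in\zZ}2^{k\lambda p}\|\vec\Psi_{r_k}*f\|_{L^p(\zR^n)}^p.\]

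The remaining step is a Besov characterization. By Proposition~\ref{propo:propertiesPhi}, each $\Psi^i$ is smooth, has zero mean, decays as $(1+|x|)^{-(n+2-a)}$, and has bounded derivatives. These properties exceed what is needed for the classical discrete Calderón--Littlewood-Paley characterization
\[\sum_{k\in\zZ}2^{k\lambda p}\|\vec\Psi_{2^{-k}}*f\|_{L^p(\zR^n)}^p\leq C\|f\|_{B_p^\lambda(\zR^n)}^p,\]
valid for $0<\lambda<1$ and $1<p<\infty$ (and equivalently, by rescaling, for the family $\vec\Psi_{r_k}$ with $r_k=c\,2^{-k}$). Combining this bound with the Whitney sum above completes the argument.

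The main obstacle I anticipate is the invocation in the last step: one must verify that the specific kernel $\vec\Psi$ produced by the mean value formula fulfills the moment, decay, and regularity hypotheses of the Calderón-type characterization, and that the resulting norm is equivalent to the wavelet-based Besov norm used elsewhere in the paper. Proposition~\ref{propo:propertiesPhi} provides comfortably more than the required bounds, but this reduction must be written out. A secondary technicality is the uniform choice of Whitney radius $r_k$ per generation, which merely needs a safety factor in the relation $\ell(Q)\sim \mathrm{dist}(Q,\partial D)$; notably, the Lipschitz hypothesis on $\partial D$ is not used in this lemma, only that $D$ is open.
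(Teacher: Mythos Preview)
Your argument is sound but follows a different path from the paper. Both proofs begin by differentiating the mean value identity to obtain $\partial_i f(x)=r^{-1}(\Psi^i_r\ast f)(x)$ for $0<r<\delta(x)$. From here the paper proceeds \emph{pointwise}: splitting $\zR^n$ into dyadic annuli centered at $x$ and using the zero mean, decay and smoothness of $\Psi^i$ from Proposition~\ref{propo:propertiesPhi}~(e)--(g), it proves the estimate $|\nabla f(x)|\le C\,r^{\lambda-1}M^{\sharp,\lambda}f(x)$ with $M^{\sharp,\lambda}$ the Calder\'on sharp maximal function (this is Lemma~\ref{lema.acot.max}); letting $r\uparrow\delta(x)$ and invoking the DeVore--Sharpley bound $\|M^{\sharp,\lambda}f\|_{L^p(\zR^n)}\le C\|f\|_{B_p^\lambda(\zR^n)}$ from \cite{DeSa84} gives the lemma immediately. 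Your route replaces this pointwise maximal control by a Whitney decomposition that freezes the scale on each generation of cubes and then appeals to a discrete Littlewood--Paley upper bound $\sum_k 2^{k\lambda p}\|\Psi^i_{2^{-k}}\ast f\|_{L^p}^p\le C\|f\|_{B_p^\lambda}^p$. The paper's approach is shorter and yields a pointwise inequality that is of independent interest; yours avoids the maximal-function theory of \cite{DeSa84} at the cost of the Whitney machinery. One small remark: you only need the \emph{one-sided} Littlewood--Paley inequality, not a full characterization, so no Tauberian condition on $\widehat{\Psi^i}$ is required---the vanishing mean, the decay $(1+|x|)^{-(n+2-a)}$ and the bounded gradient of $\Psi^i$ in Proposition~\ref{propo:propertiesPhi} suffice.
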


The main difference between the local case in
\cite{DaDeV97} and our nonlocal setting is precisely provided by
the fact that since our mean value kernel is not localized in $D$,
the Calderón maximal operator needs to be taken on the whole
$\zR^{n}$, not only on $D$.

The result is itself a consequence of a pointwise estimate
of the gradient of $f$ in terms of the sharp Calderón maximal
operator and \cite{DeSa84}. The result is contained in the next
statement and follows from the mean value formula in Theorem~\ref{teopri}, and the basic properties of the mean value kernel $\Phi_{r}$ and its first order partial derivatives contained in Proposition~\ref{propo:propertiesPhi}.
.

\begin{lemma}\label{lema.acot.max}
Let $D$ and $\lambda$ be as in Lemma~\ref{lemma:normgradientsestimates} and let $x\in D$ and $0<r<\delta(x)$. Then
\begin{displaymath}
|\nabla f(x)|\leq C r^{\lambda-1}M^{\sharp,\lambda}f(x),
\end{displaymath}
with
\begin{displaymath}
M^{\sharp,\lambda}f(x)=\sup\frac{1}{|B|^{1+\frac{\lambda}{n}}}\int_{B}\abs{f(y)-f(x)}dy
\end{displaymath}
where the supremum is taken on the family of all balls of
$\zR^{n}$ containing $x$.
\end{lemma}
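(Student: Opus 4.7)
The plan is to differentiate the mean value formula from Theorem~\ref{teopri} in $x$ and then to exploit the cancellation and decay properties of $\Psi^{i}=\partial_{i}\Phi$ collected in Proposition~\ref{propo:propertiesPhi}. Because $\Phi$ and its first partials decay as in parts (b) and (f) of that proposition, differentiation under the integral sign in $f(x)=(\Phi_{r}\ast f)(x)$ is legitimate, and the scaling $\partial_{i}\Phi_{r}(\xi)=r^{-n-1}\Psi^{i}(\xi/r)$ combined with $\int_{\zR^{n}}\Psi^{i}=0$ (part (e)) produces
\begin{displaymath}
\partial_{i} f(x)=\frac{1}{r^{n+1}}\int_{\zR^{n}}\Psi^{i}\!\pii\frac{x-z}{r}\pdd\cii f(z)-f(x)\cdd dz.
\end{displaymath}
This representation is what will couple naturally with $M^{\sharp,\lambda}f(x)$.

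Next I would split $\zR^{n}$ into a near region $\set{z:|x-z|\leq 2r}$ and a far region decomposed into dyadic annuli $A_{k}=\set{z:2^{k}r<|x-z|\leq 2^{k+1}r}$, $k\geq 1$. On the near region, $\Psi^{i}(0)=0$ together with the boundedness of $\nabla\Psi^{i}$ (parts (e) and (g)) gives the Lipschitz bound $|\Psi^{i}((x-z)/r)|\leq C|x-z|/r$; plugging this into the display above reduces matters to an integral of $|f(z)-f(x)|$ over $B(x,2r)$, which by the very definition of $M^{\sharp,\lambda}f(x)$ with $B=B(x,2r)$ is at most $C r^{n+\lambda}M^{\sharp,\lambda}f(x)$. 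The powers of $r$ then collapse exactly to $Cr^{\lambda-1}M^{\sharp,\lambda}f(x)$. On each annulus $A_{k}$, where $|(x-z)/r|>2$, part (f) furnishes the decay $|\Psi^{i}((x-z)/r)|\leq C(r/|x-z|)^{n+2-a}$; applying the same Calder\'on sharp argument with $B=B(x,2^{k+1}r)$ bounds the contribution of $A_{k}$ by a constant multiple of $r^{\lambda-1}\,2^{k(\lambda+a-2)}M^{\sharp,\lambda}f(x)$.

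The main obstacle I anticipate is the convergence of the resulting geometric series $\sum_{k\geq 1}2^{k(\lambda+a-2)}$ on the far region. It converges because $\lambda<1$ and $-1<a<1$ force $\lambda+a-2<0$, so the far region also contributes $Cr^{\lambda-1}M^{\sharp,\lambda}f(x)$. A secondary point to address is the pointwise differentiability of $f$ at $x\in D$; this is a consequence of the interior smoothness of solutions of $(-\triangle)^{s}f=0$ obtained through the Caffarelli--Silvestre extension of \cite{CaSi2007} and the regularity results of \cite{FaKeSe82}. Adding the near and far estimates then gives $|\nabla f(x)|\leq Cr^{\lambda-1}M^{\sharp,\lambda}f(x)$, as claimed.
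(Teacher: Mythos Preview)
Your argument is correct and follows essentially the same route as the paper: differentiate the mean value formula, subtract $f(x)$ using $\int\Psi^{i}=0$, split into a near region and dyadic far annuli, and control each piece via the Lipschitz bound at the origin (parts~(e),~(g)) and the decay (part~(f)) of Proposition~\ref{propo:propertiesPhi}. The only minor difference is that on the near part $B(x,2r)$ you bound $|x-z|/r\leq 2$ directly and invoke $M^{\sharp,\lambda}$ on the single ball $B(x,2r)$, whereas the paper also performs an inner dyadic decomposition there; your shortcut is perfectly valid.
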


\begin{proof}
 From the definition of $\Phi$ it is clear that
$\frac{\partial}{\partial x_{i}}\Phi_{r}(x)=\frac{1}{r}\Psi_{r}^{i}(x)$ with
$\Psi^{i}(x)=2\int_{0}^{\infty}\int_{z\in\zR^{n}}\frac{\partial
\varphi}{\partial z_{i}}(z,y)P_{y}^{a}(x-z)y^{a} dz dy$,
$i=1,\ldots,n$. Since from \textit{(e)} in Proposition~\ref{propo:propertiesPhi} we have that
$\Psi^{i}(0)=0$, then
\begin{equation}\label{acot.origen}
|\Psi_{r}^{i}(x)|=\abs{\Psi_{r}^{i}(x)-\Psi_{r}^{i}(0)}\leq
|x|\sup\limits_{\xi\in\zR^{n}}|\nabla\Psi_{r}^{i}(\xi)|\leq
\frac{C}{r^{n+1}}|x|,
\end{equation}
from \textit{(g)} in Proposition~\ref{propo:propertiesPhi}. This is a good estimate in a
neighborhood of $0$.
Applying the mean value formula for $f$ we get the
result after the following estimates,
\begin{displaymath}
\begin{split}
\abs{ \frac{\partial f(x)}{\partial x_{i}}}&=\abs{ \frac{\partial }{\partial x_{i}}\pii\Phi_{r}\ast f\pdd(x)}\\
&=\abs{\frac{1}{r}\int_{\zR^{n}}f(x-z)\Psi_{r}^{i}(z)dz}\\
&=\abs{\frac{1}{r}\int_{\zR^{n}}\pii
f(x-z)-f(x)\pdd\Psi_{r}^{i}(z)dz}\\
&=\abs{\frac{1}{r}\int_{\zR^{n}}\pii
f(z)-f(x)\pdd\Psi_{r}^{i}(x-z) dz}\\
&\leq\frac{1}{r}\int\limits_{B(x,2r)}
\!\!\!|f(z)-f(x)||\Psi_{r}^{i}(x-z)|dz+\frac{1}{r}\int\limits_{B^c(x,2r)}
\!\!\!|f(z)-f(x)||\Psi_{r}^{i}(x-z)| dz\\
&= I + II.
\end{split}
\end{displaymath}
We shall bound $I$ using \eqref{acot.origen},
\begin{displaymath}
\begin{split}
I&=\frac{1}{r}\int_{B(x,2r)}|f(z)-f(x)||\Psi_{r}^{i}(x-z)|dz\\
&\leq\frac{C}{r^{n+2}} \int_{B(x,2r)}|f(z)-f(x)||x-z| dz\\
&=\frac{C}{r^{n+2}}\sum_{j=0}^{\infty}\int_{\set{z:\, 2^{-j-1}\leq{\frac{|x-z|}{2r}<2^{-j}}}}|f(z)-f(x)||x-z| dz\\
&\leq\frac{C}{r^{n+2}}\sum_{j=0}^{\infty}\int_{B(x,2^{-j+1}r)}|f(z)-f(x)|2^{-j+1}r dz\\
&=\frac{C}{r^{n+1}}\sum_{j=0}^{\infty}2^{-j+1}\pii2^{-j+1}r \pdd^{n+\lambda}\frac{1}{\pii 2^{-j+1}r
\pdd^{n+\lambda}}\int_{B(x,2^{-j+1}r)}|f(z)-f(x)| dz\\
&\leq C r^{\lambda-1}\sum_{j=0}^{\infty}\pii 2^{-j+1}\pdd^{n+\lambda+1} M^{\sharp,\lambda}f(x)\\
&=C r^{\lambda-1}M^{\sharp,\lambda}f(x).
\end{split}
\end{displaymath}
Now from \textit{(f)} in Proposition~\ref{propo:propertiesPhi},
\begin{displaymath}
\begin{split}
II&=\frac{1}{r}\int_{B^{c}(x,2r)}|f(z)-f(x)||\Psi_{r}^{i}(x-z)|dz\\
&\leq\frac{C}{r}\sum_{j=0}^{\infty}\int_{\set{z:\,2^{j}\leq\frac{|x-z|}{2r}<2^{j+1}}}|f(z)-f(x)|
\frac{r^{2-a}}{|x-z|^{n+2-a}} dz\\
&\leq Cr^{1-a}\sum_{j=0}^{\infty}\int_{\set{z:\,2^{j}\leq\frac{|x-z|}{2r}<2^{j+1}}}
|f(z)-f(x)| \frac{1}{(2^{j+1}r)^{n+2-a}} dz\\
&\leq \frac{C}{r^{n+1}}\sum_{j=0}^{\infty}\pii
2^{j+1}\pdd^{-n-2+a}\frac{(r2^{j+2})^{n+\lambda}}{(r2^{j+2})^{n+\lambda}}\int_{B(x,2^{j+2}r)}|f(z)-f(x)|dz\\
&\leq Cr^{\lambda-1}\pii\sum_{j=0}^{\infty}\pii 2^{j+2}\pdd^{\lambda-2+a}\pdd M^{\sharp,\lambda}f(x)\\
&=C r^{\lambda-1}M^{\sharp,\lambda}f(x)
\end{split}
\end{displaymath}
and the Lemma is proved.
\end{proof}

\begin{proof}[Proof of Theorem~\ref{teo:improvementBesov}] Follows closely the lines of the
proof of Theorem~3 in \cite{DaDeV97}. The only point in which the
nonlocal character of our situation becomes relevant is contained
in the first estimates on page~11 in \cite{DaDeV97}. On the other hand, our upper restriction on
$\lambda$ is only a consequence of the fact that we are using only estimates for the first order derivatives
(after a fine tuning of the function $\varphi$ larger values of $\lambda$ can be achieved).
Our restriction guarantees the convergence of the series involved in the above mentioned estimates in \cite{DaDeV97}.
\end{proof}



\def\cprime{$'$}
\providecommand{\bysame}{\leavevmode\hbox to3em{\hrulefill}\thinspace}
\providecommand{\MR}{\relax\ifhmode\unskip\space\fi MR }
\providecommand{\MRhref}[2]{%
  \href{http://www.ams.org/mathscinet-getitem?mr=#1}{#2}
}
\providecommand{\href}[2]{#2}


\bigskip
\bigskip

{\footnotesize
\textsc{Instituto de Matem\'atica Aplicada del Litoral (IMAL), CONICET-UNL}


\smallskip
\textmd{G\"{u}emes 3450, S3000GLN Santa Fe, Argentina.}
}

\bigskip

\end{document}